\newtheorem{thm}{Theorem}[section]
\newtheorem{theorem}{Theorem}[section]
\newtheorem{remark}[thm]{Remark}
\newtheorem{lemma}[thm]{Lemma}
\def\RR{{\mathbb R}}
\def\C{{\mathcal C}}
\newcommand{\bEm}{\mathbb{E}_m}
\newcommand{\diver}{{\rm div}}
\newcommand{\N}{\mathbb{N}}
\newcommand{\R}{\mathbb{R}}
\newcommand{\divg}{\diver_\gamma}
\newcommand{\Dg}{{\Delta_\gamma}}
\newcommand{\gga}{\gamma}
\newcommand{\Ldeu}{L^2_\gamma(X)}
\newcommand{\FCb}{{\mathcal F C}_b^1}
\newcommand{\FCbt}{{\mathcal F C}_b^2}
\newcommand{\FCbk}{{\mathcal F C}_b^k}
\newcommand\scal[2]{{\left\langle #1 ,#2\right\rangle}}
\def\1{{\bf 1}}
\title[]{On some pointwise inequalities involving nonlocal operators}
\author{Luis A. Caffarelli}
\author{Yannick Sire}
\begin{document}
\maketitle

\begin{abstract}
The purpose of this paper is three-fold: first, we survey on several known pointwise identities involving fractional operators; second, we propose a unified way to deal with those identities; third, we prove some new pointwise identities in different frameworks in particular geometric and infinite-dimensional ones. \end{abstract}

\begin{center}
{\sl To Dick Wheeden, on the occasion of his $70$th birthday, with admiration and affection}
\end{center}

\tableofcontents

\section{Introduction}

The present paper is devoted to several pointwise inequalities involving several nonlocal operators. We focus on two types of pointwise inequalities: the C\'ordoba-C\'ordoba inequality and the Kato inequality. In order to keep the presentation simple, we state the inequalities in question in the case of the fractional laplacian, i.e. $(-\Delta)^s$,  in $\RR^n$. Actually, in subsequent sections, we will generalize these inequalities to a  lot of different contexts. Furthermore, we will present a unified proof for both inequalities based on some extension properties of some nonlocal operators. Our proofs are elementary and simplify the original arguments.

The fractional  
Laplacian can be defined in various ways, which we review now. 
It can be defined using Fourier transform by 
$$
\mathcal F((-\Delta)^s v) = \left| \xi \right|^{2s}\mathcal F(v), 
$$
for $v\in H^s(\RR^n)$. It can also
be defined through the kernel representation (see the book by Landkof \cite{landkof})
\begin{equation}\label{defPV}
(-\Delta)^{s} v(x)=C_{n,s}\ \textrm{P.V.}\int_{\mathbb{R}^n} \frac{v(x)-v(\overline x)}
{|x-\overline x|^{n+2s}}\,d\overline x,
\end{equation}
for instance for $v\in\mathcal{S}(\RR^n)$, the Schwartz space of rapidly decaying functions. Here we will only consider $s \in (0,1). $ 

The inequalities considered in the present paper are the following

\begin{thm}[C\'ordoba-C\'ordoba inequality]\label{CC}
Let $\varphi$ be a $C^2(\RR^n)$ convex function. Assume that $u$ and $\varphi(u)$ are such that $(-\Delta)^s u $ and $(-\Delta)^s \varphi(u) $ exist. Then the following holds
\begin{equation}\label{CCeq}
(-\Delta)^s\varphi(u) \leq \varphi'(u) (-\Delta)^s\,u . 
\end{equation}
\end{thm}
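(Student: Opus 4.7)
My plan is to avoid the original proof of C\'ordoba--C\'ordoba based on the singular integral formula \eqref{defPV} and instead exploit the Caffarelli--Silvestre extension, since the introduction announces a unified treatment based on extension properties. Let $U=U(x,y)$ on $\mathbb{R}^n\times(0,\infty)$ denote the $s$-harmonic extension of $u$, i.e.\ the solution of
\begin{equation*}
\mathrm{div}\bigl(y^{1-2s}\nabla U\bigr)=0\quad\text{in }\mathbb{R}^{n+1}_+,\qquad U(x,0)=u(x),
\end{equation*}
and similarly let $V$ denote the $s$-harmonic extension of $\varphi(u)$. Recall that in this formalism
\begin{equation*}
(-\Delta)^s u(x)=-c_s\lim_{y\to 0^+}y^{1-2s}\partial_y U(x,y),
\end{equation*}
and likewise for $(-\Delta)^s\varphi(u)$ through $V$, with a positive constant $c_s>0$.

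The key observation is that $W:=\varphi(U)$ has the \emph{same} trace as $V$ on $\{y=0\}$ but is only a subsolution of the degenerate elliptic equation. Indeed, by the chain rule and the fact that $U$ is itself $s$-harmonic,
\begin{equation*}
\mathrm{div}\bigl(y^{1-2s}\nabla W\bigr)=\varphi'(U)\,\mathrm{div}\bigl(y^{1-2s}\nabla U\bigr)+y^{1-2s}\varphi''(U)\,|\nabla U|^2=y^{1-2s}\varphi''(U)\,|\nabla U|^2\ge 0,
\end{equation*}
where convexity of $\varphi$ is used crucially in the last step. Since $V-W$ vanishes on $\{y=0\}$, satisfies $\mathrm{div}(y^{1-2s}\nabla(V-W))\le 0$, and has appropriate decay at infinity (under the standing assumption that both fractional Laplacians are well defined), the maximum principle for the weighted operator $\mathrm{div}(y^{1-2s}\nabla\,\cdot\,)$ yields $V\ge W$ throughout $\mathbb{R}^{n+1}_+$.

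Since $V(x,0)=W(x,0)=\varphi(u(x))$ and $V\ge W$ in the half-space, the one-sided Hopf-type comparison gives $\partial_y V(x,0^+)\ge \partial_y W(x,0^+)$, and in fact the corresponding weighted normal derivatives satisfy
\begin{equation*}
\lim_{y\to 0^+}y^{1-2s}\partial_y V(x,y)\ \ge\ \lim_{y\to 0^+}y^{1-2s}\partial_y W(x,y)=\varphi'(u(x))\lim_{y\to 0^+}y^{1-2s}\partial_y U(x,y).
\end{equation*}
Multiplying by $-c_s<0$ reverses the inequality and produces exactly \eqref{CCeq}.

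The main technical obstacle I anticipate is justifying the weighted maximum principle and the passage to the weighted normal derivative under the sole hypothesis that $(-\Delta)^s u$ and $(-\Delta)^s\varphi(u)$ exist pointwise; this requires some care about growth at infinity and the precise sense in which traces of the Neumann data are taken. A fallback, in case the extension machinery looks too heavy, is to argue directly from \eqref{defPV}: the pointwise convexity inequality $\varphi(u(x))-\varphi(\bar x)\le \varphi'(u(x))(u(x)-u(\bar x))$ integrated against the positive kernel $|x-\bar x|^{-n-2s}$ outside a ball of radius $\eps$ gives the inequality for the truncated operators, and passing $\eps\to 0$ yields \eqref{CCeq}; however the extension argument is more robust and will generalize painlessly to the geometric and infinite-dimensional frameworks announced in the abstract.
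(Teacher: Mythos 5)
Your proposal is correct and follows essentially the same route as the paper: compare the $s$-harmonic extension $V$ of $\varphi(u)$ with $\varphi(U)$, use convexity to see that $\varphi(U)$ is a subsolution of $L_a$, and conclude by the maximum principle together with a Hopf-type comparison of the weighted normal derivatives at $\{y=0\}$. (Your sign bookkeeping for $V-W$ is in fact cleaner than the paper's, which asserts $\tilde w=\varphi(w)-v\ge 0$ where the subsolution property actually gives $\varphi(w)\le v$.)
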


The next theorem is the Kato inequality. 
\begin{thm}[Kato inequality]\label{kato}
The following inequality holds in the distributional sense
\begin{equation}\label{k}
(-\Delta)^s |u| \leq \text{sgn}(u) (-\Delta)^s\,u .
\end{equation}
\end{thm}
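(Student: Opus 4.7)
The plan is to derive Kato's inequality as a limiting case of the Córdoba--Córdoba inequality (Theorem \ref{CC}) applied to a smooth convex approximation of $t\mapsto |t|$. The standard regularization is
$$
\varphi_\eps(t)=\sqrt{t^2+\eps^2},\qquad \eps>0,
$$
which is of class $C^2(\RR)$ and strictly convex, with
$$
\varphi_\eps'(t)=\frac{t}{\sqrt{t^2+\eps^2}},\qquad \varphi_\eps''(t)=\frac{\eps^2}{(t^2+\eps^2)^{3/2}},\qquad |\varphi_\eps'(t)|\le 1.
$$
As $\eps\downarrow 0$, one has $\varphi_\eps(t)\to |t|$ uniformly on $\RR$ and $\varphi_\eps'(t)\to \mathrm{sgn}(t)$ pointwise, with the natural convention $\mathrm{sgn}(0)=0$ matched by $\varphi_\eps'(0)=0$ for every $\eps$.

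Theorem \ref{CC} applied to $\varphi_\eps$ delivers the pointwise inequality
$$
(-\Delta)^s\varphi_\eps(u)\ \le\ \varphi_\eps'(u)\,(-\Delta)^s u.
$$
The next step is to pair this against an arbitrary nonnegative test function $\psi\in C_c^\infty(\RR^n)$, exploiting the self-adjointness of $(-\Delta)^s$ to move the operator on the left-hand side onto $\psi$, thereby producing
$$
\int_{\RR^n} \varphi_\eps(u)\,(-\Delta)^s\psi\,dx\ \le\ \int_{\RR^n} \varphi_\eps'(u)\,(-\Delta)^s u\,\psi\,dx.
$$
I then pass to the limit $\eps\to 0$ by dominated convergence on both sides: the majorants $|\varphi_\eps(u)|\le |u|+1$ and $|\varphi_\eps'(u)|\le 1$ are integrable against the Schwartz-class decay of $(-\Delta)^s\psi$ and against the compactly supported $\psi\,(-\Delta)^s u$, respectively. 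The limit yields
$$
\int_{\RR^n} |u|\,(-\Delta)^s\psi\,dx\ \le\ \int_{\RR^n} \mathrm{sgn}(u)\,(-\Delta)^s u\,\psi\,dx,
$$
which is exactly the distributional form of \eqref{k}.

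The only real technicality is fixing a functional framework in which the integrals above are absolutely convergent so that dominated convergence applies; this is ensured once $u$ is taken in a reasonable class (for instance $u\in L^\infty$ with $(-\Delta)^s u$ locally integrable, consistently with the existence hypothesis of Theorem \ref{CC}). An alternative in the spirit of the paper's unified extension viewpoint is to carry out the extension-based proof of Theorem \ref{CC} directly with $\varphi_\eps$ applied to the $s$-harmonic extension of $u$ and pass to the limit at the level of the extended problem, where the degenerate weight $y^{1-2s}$ provides a natural uniform framework in $\eps$; the underlying approximation, however, is the same.
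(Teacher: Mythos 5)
Your proposal follows exactly the paper's own route: the paper deduces Theorem \ref{kato} from Theorem \ref{CC} applied to the convex regularization $\varphi_\epsilon(x)=\sqrt{x^2+\epsilon^2}$ followed by ``a standard approximation argument,'' which is precisely the limit you carry out. Your write-up merely makes the distributional pairing and dominated convergence explicit, which the paper leaves implicit.
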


The previous two theorems are already known: Theorem \ref{CC} is due to C\'ordoba and C\'ordoba (see \cite{CCCMP, CCPNAS}). Theorem \ref{kato} is due to Chen and V\'eron (see \cite{CV}). Both original proofs are based on the representation formula given in \eqref{defPV}. This formula holds only when the fractional laplacian is defined on $\RR^n$. The C\'ordoba-C\'ordoba inequality is a very useful result in the study of the quasi-geostrophic equation (see \cite{CCCMP}). This inequality has been generalized in several contexts in \cite{CordobaAdv} for instance or \cite{constantin}. In this line of research we propose a unified way of proving these inequalities based on some extension properties for nonlocal operators. 

\section{Some new inequalities}\label{new}

In this section, we derive by a very simple argument several inequalities at the nonlocal level, i.e. without using any extensions, which are not available in these frameworks. 
\subsection{A pointwise inequality for nonlocal operators in non-divergence form}

Nonlocal operators in non-divergence form are defined by 
$$
\mathcal I u(x)=-\int_{\RR^n} (u(x+y)+u(x-y)-2u(x))K(y)\,dy
$$
for a  kernel $K \geq 0$. Denote 
$$
\delta_y u (x)= -\Big ( u(x+y)+u(x-y)-2u(x) \Big ).
$$ 
Then, considering a $C^2$ convex function $\varphi$, one has by the fact that a convex function is above its tangent line
$$
\delta_y \varphi(u)(x)=-\Big ( \varphi(u(x+y))+\varphi(u(x-y))-2\varphi(u(x))\Big )=$$
$$-\Big (\varphi(u(x+y))-\varphi(u(x))+\varphi(u(x-y))-\varphi(u(x))\Big )
$$
$$
\leq \varphi '(u(x)) \delta_y u(x).
$$
Hence for the operator $\mathcal I$ one has also an analogue of the original C\'ordoba-C\'ordoba estimate. 

\subsection{The case of translation invariant kernels}

Consider the operator 

$$
\mathcal L u(x)=\int_{\RR^n} (u(x)-u(y))K(x-y)\,dy
$$
where $K$ is symmetric. Hence one can write 
$$
\mathcal L u(x)=\int_{\RR^n} (u(x)-u(x-h))K(h)\,dh
$$
or in other words, by a standard change of variables 
$$
\mathcal L u(x)=\frac12 \int_{\RR^n} \delta_h u (x)K(h)\,dh
$$
We start with the following lemma, which is a direct consequence of the symmetry of the kernel
\begin{lemma}
$$
\int_{\RR^n} \mathcal L u(x)=0. 
$$
\end{lemma}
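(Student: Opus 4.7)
\medskip

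\noindent\textbf{Proof plan.} The natural strategy is to iterate the integral and exchange the two variables of integration, exploiting the symmetry $K(h)=K(-h)$ (which is the content of the word ``symmetric'' in the statement). Concretely, I would start from the second-difference representation derived just before the lemma, namely
$$
\int_{\RR^n}\mathcal L u(x)\,dx
=\frac12\int_{\RR^n}\left(\int_{\RR^n}\delta_h u(x)\,K(h)\,dh\right)dx,
$$
and apply Fubini's theorem to interchange the order of integration. Once the $h$-integral is on the outside, the inner integral reads
$$
\int_{\RR^n}\delta_h u(x)\,dx
=-\int_{\RR^n}\bigl(u(x+h)+u(x-h)-2u(x)\bigr)\,dx,
$$
and by translation invariance of Lebesgue measure each of the three pieces is equal to $\int u$, so the inner integral vanishes identically for every $h$. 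Multiplying by $K(h)$ and integrating in $h$ then gives zero, which is the claim.

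\medskip

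\noindent An equivalent route, perhaps more transparent, would be to work directly with the original form
$$
\int_{\RR^n}\!\int_{\RR^n}\bigl(u(x)-u(y)\bigr)K(x-y)\,dy\,dx,
$$
swap the dummy variables $x\leftrightarrow y$, and use $K(x-y)=K(y-x)$: the double integral equals its own negative and must therefore be zero. This is essentially the same argument, expressed at the level of the original double integral rather than via the $\delta_h$ representation. I would probably present the $\delta_h$ version, since the preceding paragraph has already put the operator in that form.

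\medskip

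\noindent The only real obstacle is the justification of Fubini, since $K$ is generally singular at the origin (the fractional Laplacian kernel is the model case). Under the standing assumption that $\mathcal L u(x)$ is defined pointwise in the principal-value sense, one already has $\delta_h u(x) K(h)$ integrable in $h$ near $0$ thanks to the second-order cancellation $\delta_h u(x)=O(|h|^2)$ for $u\in C^2$, together with the usual integrability of $K$ at infinity; assuming enough decay on $u$ (say, $u\in\mathcal S(\RR^n)$, or $u\in L^1\cap C^2_b$ with a kernel satisfying $\int \min(1,|h|^2)K(h)\,dh<\infty$) the double integral is absolutely convergent and Fubini applies. I would state these decay/integrability hypotheses explicitly at the start of the proof so that the swap of integrals is unambiguous.
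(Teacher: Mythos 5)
Your proposal is correct, and your second route (swap the dummy variables $x\leftrightarrow y$ and use $K(x-y)=K(y-x)$ so the double integral equals its own negative) is exactly the argument the paper has in mind when it calls the lemma ``a direct consequence of the symmetry of the kernel''; the paper gives no further detail. Your attention to the Fubini/absolute-convergence issue is a reasonable addition, though note that your preferred $\delta_h$ route actually uses translation invariance of Lebesgue measure rather than the symmetry of $K$, so it is the antisymmetrization version that matches the paper's stated reasoning.
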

The following lemma is consequence of straightforward computations
\begin{lemma}
$$
\delta_h uv(x)=u\delta_h v+v\delta_h u + 
$$
$$
(v(x+h)-v(x))(u(x+h)-u(x))+(v(x-h)-v(x))(u(x-h)-u(x)). 
$$
\end{lemma}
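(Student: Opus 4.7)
The plan is a pure algebraic verification: expand both sides as linear combinations of the six scalar quantities $u(x)$, $u(x\pm h)$, $v(x)$, $v(x\pm h)$ and the symmetric products $u(x\pm h)v(x\pm h)$, then check that the coefficients match. The claimed identity is nothing more than the centered-second-difference analogue of the classical product rule $(uv)'' = u''v + 2u'v' + uv''$, so I expect the computation to be a straightforward bookkeeping exercise with no analytic content at all (no limits, no integration, no regularity issues --- it holds pointwise for each fixed $x$ and $h$).

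First I would write out the left-hand side directly from the definition of $\delta_h$ recorded earlier in the paper,
$$
\delta_h(uv)(x) = 2u(x)v(x) - u(x+h)v(x+h) - u(x-h)v(x-h).
$$
Next I would expand $u(x)\delta_h v(x) + v(x)\delta_h u(x)$, which produces all the "mixed" monomials $u(x)v(x\pm h)$ and $v(x)u(x\pm h)$ (with coefficient $-1$ each) together with a $4u(x)v(x)$ diagonal contribution, but is missing entirely the genuinely shifted products $u(x\pm h)v(x\pm h)$. Those two products are exactly what the two correction terms $(u(x+h)-u(x))(v(x+h)-v(x))$ and $(u(x-h)-u(x))(v(x-h)-v(x))$ contribute upon expansion, while simultaneously generating compensating $-u(x)v(x\pm h)$, $-v(x)u(x\pm h)$ and $+u(x)v(x)$ pieces.

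The last step is then purely arithmetic: collect the coefficient of each of the five monomial types ($u(x)v(x)$, $u(x)v(x\pm h)$, $v(x)u(x\pm h)$, $u(x\pm h)v(x\pm h)$) coming from the three groups on the right-hand side, and verify that each matches the corresponding coefficient on the left.

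The main --- and really only --- thing to be careful about is sign consistency, because the paper defines $\delta_y u(x) = -\bigl(u(x+y) + u(x-y) - 2u(x)\bigr)$ with an overall minus sign; this sign must be tracked uniformly through both $u\,\delta_h v$ and $v\,\delta_h u$. Once that is done, the identity follows instantly by inspection, so there is no genuine obstacle to speak of: the lemma is a finite combinatorial identity in six real numbers.
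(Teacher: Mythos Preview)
Your approach is exactly the paper's: the lemma is introduced there simply as a ``consequence of straightforward computations,'' and the monomial bookkeeping you describe is precisely that computation. There is no alternative argument to compare.

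There is, however, a genuine gap in your write-up: you correctly flag the sign convention $\delta_h f(x)=2f(x)-f(x+h)-f(x-h)$ as the only hazard, but then assert that ``the identity follows instantly by inspection'' without actually performing the inspection. If you do, it does not check. Writing $a=u(x)$, $a_\pm=u(x\pm h)$, $b=v(x)$, $b_\pm=v(x\pm h)$, the left-hand side is $2ab-a_+b_+-a_-b_-$, whereas the right-hand side as printed sums to
\[
6ab-2ab_+-2ab_--2a_+b-2a_-b+a_+b_++a_-b_-,
\]
and the difference $2\bigl[(a-a_+)(b-b_+)+(a-a_-)(b-b_-)\bigr]$ is not identically zero. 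The identity that your expansion actually verifies is
\[
\delta_h(uv)=u\,\delta_h v+v\,\delta_h u-(v(x{+}h)-v(x))(u(x{+}h)-u(x))-(v(x{-}h)-v(x))(u(x{-}h)-u(x)),
\]
with the two correction terms \emph{subtracted}. So your plan is sound, but the step you left implicit is exactly where the stated formula fails; a proof attempt here should carry the arithmetic to the end and note the sign discrepancy rather than claim agreement.
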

Hence by the two previous lemma one has the useful identity
$$
0=\int_{\RR^n} \mathcal L u^2=2\int_{\RR^n} u\mathcal L u + 2\int_{\RR^n}\int_{\RR^n} (u(x)-u(y))^2K(x-y)\,dxdy. 
$$
\subsection{Some integral operators on geometric spaces}

In this section, we describe new operators involving curvature terms. These operators appear naturally in harmonic analysis, as described below. They are of the form 
$$
\mathcal L u(x)=\int (u(x)-u(y))K(x,y)\,dy
$$
where the non-negative kernel $K$ is symmetric and has some geometric meaning. The integral sign runs either over a Lie group or over a Riemannian manifold. By exactly the same argument as in the previous section, one deduces trivially C\'ordoba-C\'ordoba estimates for these operators. We now describe these new operators. 

\subsubsection*{The case of Lie groups}
Let $G$ be a  unimodular connected Lie group endowed with the Haar measure $dx$. By ``unimodular'', we mean that the Haar measure is left and right-invariant. If we denote by $\mathcal  G$  the Lie algebra  of $G$, we consider a family 
$$\mathbb X= \left \{ X_1,...,X_k \right \}$$
of left-invariant vector fields on $G$ satisfying the H\"ormander condition, i.e. $\mathcal G$ is the Lie algebra generated by the $X_i's$. A standard metric on $G$ , called the Carnot-Caratheodory metric, is naturally associated with $\mathbb X$ and is defined as follows: let $\ell : [0,1] \to G$ be an absolutely continuous path. We say that $\ell$ is admissible if there exist measurable functions $a_1,...,a_k : [0,1] \to \mathbb C$ such that, for almost every $t \in [0,1]$, one has 
$$\ell'(t)=\sum_{i=1}^k a_i(t) X_i(\ell(t)).$$
If $\ell$  is admissible, its length is defined by 
$$|\ell |= \int_0^1\left(\sum_{i=1}^k |a_i(t)|^2 \,dt \right)^{ \frac 12 }.$$

For all $x,y \in G $, define $d(x,y)$ as  the infimum of the lengths  of all admissible paths joining $x$ to $y$ (such a curve exists by the H\"ormander condition). This distance is left-invariant. For short, we denote by $|x|$ the distance between $e$, the neutral element of the group and $x$,  so that the distance from $x$ to $y$ is equal to  $|y^{-1}x|$. 

For all $r>0$, denote by $B(x,r)$ the open ball in $G$ with respect to the Carnot-Caratheodory distance and by $V(r)$ the Haar measure of any ball. There exists $d\in \N^{\ast}$ (called the local dimension of $(G,\mathbb X)$) and $0<c<C$ such that, for all $r\in (0,1)$,
$$
cr^d\leq V(r)\leq Cr^d,
$$
see \cite{nsw}. When $r>1$, two situations may occur (see \cite{guivarch}): 
\begin{itemize}
\item Either there exist $c,C,D >0$ such that, for all $r>1$, 
$$c r^D \leq V(r) \leq C r^D$$
where $D$ is called the dimension at infinity of the group (note that, contrary to $d$, $D$ does not depend on $\mathbb X$). The group is said to have polynomial volume growth. 
\item Or  there exist $c_1,c_2,C_1,C_2 >0$ such that, for all $r>1$, 
$$c_1 e^{c_2r} \leq V(r) \leq C_1 e^{C_2r}$$
and the group is said to have exponential volume growth. 
\end{itemize} 
When $G$ has polynomial volume growth, it is plain to see that there exists $C>0$ such that, for all $r>0$,
\begin{equation} \label{homog}
V(2r)\leq CV(r),
\end{equation}
which implies that there exist $C>0$ and $\kappa>0$ such that, for all $r>0$ and all $\theta>1$,
\begin{equation} \label{homogiter}
V(\theta r)\leq C\theta^{\kappa}V(r).
\end{equation}

On a Lie group as previously described, one introduces the Kohn sub-laplacian 
$$
\Delta_G=\sum_{i=1}^k X_i^2. 
$$

On any Lie group $G$, it is natural by functional calculus to define the fractional powers $(-\Delta_G)^s$, $s \in (0,1)$ of the Kohn sub-laplacian $-\Delta_G$. It has been proved in \cite{MRS, RS} (see also \cite{SW}) that for Lie groups with polynomial volume
$$
\|(-\Delta_G)^{s/2}u\|^2_{L^2(G)} \leq C \int_{G \times G} \frac{|u(x)-u(y)|^2}{V(|y^{-1}x|)|y^{-1}x|^{2s}}\,dx \,dy. 
$$

It is therefore natural to consider the operator which is the Euler-Lagrange operator of the Dirichlet form in the R.H.S. of the previous equation given by
$$
\mathcal Lu(x)=\int_G \frac{u(x)-u(y)}{V(|y^{-1}x|)|y^{-1}x|^{2s}}\,dy. 
$$

It defines a new Gagliardo-type norm, suitably designed for Lie groups (of any volume growth). By the structure itself of this norm, one can prove as before a C\'ordoba-C\'ordoba inequality. 

\subsubsection*{The case of manifolds}

Let $M$ be a complete riemannian manifold of dimension $n$. Denote $d(x,y)$ the geodesic distance from $x$ to $y$. Similarly to the previous case it is natural to introduce the new operators, Euler-Lagrange of suitable Gagliardo norms, given by 

$$
\mathcal Lu(x)=\int_{M} \frac{u(x)-u(y)}{d(x,y)^{n+2s}}\,dy
$$

These new operators also satisfy C\'ordoba-C\'ordoba estimates (see \cite{RS} for an account in harmonic analysis where these quantities pop up). 
\section{A review of the extension property}

\subsection{The extension property in $\RR^n$}\label{extRn}

We first introduce the spaces
$$H^s(\RR^n)=\left \{ v \in L^2(\RR^n)\,\,:\,\,|\xi|^{s} (\mathcal F v)(\xi) \in L^2(\RR^n) 
\right \},$$
where $s \in (0,1)$ and $\mathcal F$ denotes Fourier transform.
For $\Omega \subset \RR^{n+1}_+$ a Lipschitz domain (bounded or unbounded) and $a \in (-1,1)$,
we denote
$$H^1(\Omega,y^a)=\left \{ u \in L^2
(\Omega,y^a \,dx\,dy)\,\,:\,\,|\nabla u| \in L^2(\Omega,y^a \,dx\,dy) \right \}.$$

Let $a=1-2s$. It is well known that the space $H^s(\RR^n)$ 
coincides with the trace on $\partial\RR^{n+1}_{+}$ of $H^1(\RR^{n+1}_+,y^a)$.
In particular, every $v\in H^s(\RR^n)$ is the trace of a function $u\in L^2_{\rm loc}(\RR^{n+1}_+,y^a)$ 
such that $\nabla u \in L^2(\RR^{n+1}_+,y^a)$.  
In addition, the function $u$ which minimizes
\begin{equation} \label{argmin} 
{\rm min}\left\{ \int_{\RR^{n+1}_{+}}y^a \left| \nabla u \right|^2\;dx dy \; : \;  
u|_{\partial\RR^{n+1}_{+}}=v\right\}
\end{equation} 
solves the Dirichlet problem
\begin{equation}\label{bdyFrac2} 
\left \{
\begin{aligned} 
L_a u:= \textrm{div\,} (y^a \nabla u)&=0 \qquad 
{\mbox{ in $\RR^{n+1}_+$}} 
\\
u&= v  
\qquad{\mbox{ on $\partial\RR^{n+1}_+$.}}\end{aligned}\right . 
\end{equation} 
By standard elliptic regularity, $u$ is smooth in $\RR^{n+1}_{+}$. It turns out that 
$-y^a u_{y} (\cdot,y)$ converges in $H^{-s}(\RR^n)$ to a distribution 
$h\in H^{-s}(\RR^n)$ as $y\downarrow 0$. That is, $u$ weakly solves
\begin{equation}\label{bdyFrac3} 
\left \{
\begin{aligned} 
\textrm{div\,} (y^a \nabla u)&=0 \qquad 
{\mbox{ in $\RR^{n+1}_+ $}} 
\\
-y^a \partial_y u &= h
\qquad{\mbox{ on $\partial\RR^{n+1}_+$.}}\end{aligned}\right . 
\end{equation}
Consider the Dirichlet to Neumann operator 
$$
\begin{aligned}
&\Gamma_a: H^s(\RR^n)\to H^{-s}(\RR^n)\\
&\qquad \quad v\mapsto \Gamma_{a}(v)=  h:=
 - \displaystyle{\lim_{y \rightarrow 0^+}} y^{a} \partial_y u=\frac{\partial u}{\partial \nu^a}, 
\end{aligned}
$$
where $u$ is the solution of \eqref{bdyFrac2}. 
Then, we have:

\begin{theorem}[\cite{CS}]\label{realization}
For every $v\in H^s(\RR^n)$,
\begin{equation*}
(-\Delta)^s v= d_s\Gamma_{a}(v)= 
- d_s \displaystyle{\lim_{y \rightarrow 0^+}} y^{a} \partial_y u,
\end{equation*}
where $a=1-2s$, $d_s$ is a positive constant depending only on~$s$,
and the equality holds in the distributional sense.
\end{theorem}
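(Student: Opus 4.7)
My strategy is to reduce the extension problem to a one-parameter family of ODEs via partial Fourier transform in the tangential variable $x$, solve them explicitly in terms of modified Bessel functions, and read off the symbol of the Dirichlet-to-Neumann operator.

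First, since $L_a = y^a\Delta_x + \partial_y(y^a\partial_y\cdot)$, applying $\mathcal F_x$ to \eqref{bdyFrac2} gives, for each frequency $\xi\in\RR^n$, the ODE
\begin{equation*}
\partial_y\bigl(y^a\partial_y\hat u(\xi,y)\bigr) = y^a|\xi|^2 \hat u(\xi,y),\qquad \hat u(\xi,0)=\hat v(\xi).
\end{equation*}
By scaling, I would seek $\hat u(\xi,y) = \hat v(\xi)\,\psi(|\xi|y)$, which reduces the problem to the single profile equation $(t^a\psi'(t))' = t^a\psi(t)$ on $(0,\infty)$ with $\psi(0)=1$. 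The substitution $\psi(t)=t^s\phi(t)$ with $s=(1-a)/2$ turns this into the modified Bessel equation of order $s$, whose general solution is $\phi = c_1 I_s + c_2 K_s$. The boundedness requirement as $y\to\infty$ (enforced by the energy minimization \eqref{argmin}) eliminates the exponentially growing $I_s$ branch, leaving the unique admissible profile
\begin{equation*}
\psi(t) = \frac{2^{1-s}}{\Gamma(s)}\,t^s K_s(t).
\end{equation*}

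Second, using $1-a=2s$, one has $-y^a\partial_y\hat u(\xi,y) = -|\xi|^{2s}\,\hat v(\xi)\,t^a\psi'(t)$ with $t=|\xi|y$, so it remains to compute $\ell := -\lim_{t\to 0^+} t^a\psi'(t)$. From the standard small-argument expansion $K_s(t) = \tfrac12\Gamma(s)(t/2)^{-s} - \tfrac12\Gamma(-s)(t/2)^s + o(t^s)$, valid for $s\in(0,1)$, one obtains $\psi(t) = 1 - \alpha_s\, t^{2s} + o(t^{2s})$ with $\alpha_s = -\Gamma(-s)/(2^{2s}\Gamma(s)) > 0$, and consequently $\ell = 2s\,\alpha_s =: 1/d_s$, a positive constant depending only on $s$.

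Combining the two steps yields $-d_s\lim_{y\to 0^+} y^a\partial_y\hat u(\xi,y) = |\xi|^{2s}\hat v(\xi)$ pointwise in $\xi$, which by the Fourier definition of $(-\Delta)^s$ is precisely $\widehat{(-\Delta)^s v}(\xi)$; inverting the Fourier transform yields the theorem distributionally. The most delicate step, in my view, is not the explicit computation but rather ensuring that the pointwise ODE solution coincides with the variational solution of \eqref{bdyFrac2}: this is exactly where the selection of the $K_s$ branch, i.e.\ uniqueness in the weighted energy class $H^1(\RR^{n+1}_+,y^a)$, is essential. Once uniqueness is in place, Plancherel combined with dominated convergence (with weight $(1+|\xi|^2)^{-s}$) upgrades the frequency-wise convergence to convergence of $-y^a u_y(\cdot,y)$ in $H^{-s}(\RR^n)$, as claimed.
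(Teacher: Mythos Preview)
Your argument is correct and is essentially the original Caffarelli--Silvestre computation: take the partial Fourier transform in $x$, reduce to the profile ODE $(t^a\psi')'=t^a\psi$, identify the decaying solution via the modified Bessel function $K_s$, and read off the symbol $|\xi|^{2s}$ from the leading correction in the small-$t$ expansion of $t^s K_s(t)$. One minor slip: in your expansion of $K_s$ the second term should carry a $+\tfrac12\Gamma(-s)(t/2)^s$ rather than a minus sign; since you then also flip a sign when defining $\alpha_s$, the two errors cancel and your final value $\alpha_s=-\Gamma(-s)/(4^s\Gamma(s))>0$ is correct.

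As for comparison with the paper: there is nothing to compare. The paper does not prove Theorem~\ref{realization}; it merely quotes it from \cite{CS} as background in the review Section~\ref{extRn}. Your Fourier/Bessel proof is precisely the route taken in the cited source, so in that sense your approach coincides with the intended one. The only point worth stressing, which you already flag, is that the variational characterization \eqref{argmin} is what singles out the $K_s$ branch and guarantees that the explicit Fourier-side solution agrees with the abstract minimizer; this uniqueness step is indeed the substantive content beyond the ODE computation.
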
  

\subsection{The extension property in bounded domains}
We consider now the case of bounded domains. In this case, two different operators can be defined.

\noindent $\bullet$ {\sl The spectral Laplacian: } If one considers the classical Dirichlet Laplacian $\Delta_{\Omega}$ on the domain $\Omega$\,, then the {spectral definition} of the fractional power of $\Delta_{\Omega}$ relies on the following formulas:
\begin{equation}\label{sLapl.Omega.Spectral}
\displaystyle(-\Delta_{\Omega})^{s}
g(x)=\sum_{j=1}^{\infty}\lambda_j^s\, \hat{g}_j\, \phi_j(x)
=\frac1{\Gamma(-s)}\int_0^\infty
\left(e^{t\Delta_{\Omega}}g(x)-g(x)\right)\frac{dt}{t^{1+s}}.
\end{equation}
Here $\lambda_j>0$, $j=1,2,\ldots$ are the eigenvalues of the Dirichlet Laplacian on $\Omega$ with zero boundary conditions\,, written in increasing order and repeated according to their multiplicity and $\phi_j$ are the corresponding normalized eigenfunctions, namely
\[
\hat{g}_j=\int_\Omega g(x)\phi_j(x)\, dx\,,\qquad\mbox{with}\qquad \|\phi_j\|_{L^2(\Omega)}=1\,.
\]
The first part of the formula is therefore an interpolation definition. The second part gives an equivalent definition in terms of the semigroup associated to the Laplacian.
We will denote the operator defined in such a way as $\mathcal A_{1,s}=(-\Delta_{\Omega})^s$\,, and call it the \textit{spectral fractional Laplacian}. 

\medskip

\noindent $\bullet$~{\sl The restricted fractional laplacian:} On the other hand, one can define a fractional Laplacian operator by using the integral representation in terms of hypersingular kernels already mentioned
\begin{equation}\label{sLapl.Rd.Kernel}
(-\Delta_{\RR^d})^{s}  g(x)= C_{d,s}\mbox{
P.V.}\int_{\mathbb{R}^n} \frac{g(x)-g(z)}{|x-z|^{n+2s}}\,dz,
\end{equation}
 In this case we materialize the zero Dirichlet condition  by restricting the operator to act only on functions that are zero outside $\Omega$. We will call  the operator defined in such a way the \textit{restricted fractional Laplacian} and use the specific notation $\mathcal A_{2,s}=(-\Delta_{|\Omega})^s$ when needed. As defined,  $\mathcal A_{2,s}$ is a self-adjoint operator on $L^2(\Omega)$\,, with a discrete spectrum: we will denote by $\lambda_{s, j}>0$, $j=1,2,\ldots$ its eigenvalues written in increasing order and repeated according to their multiplicity and we will denote by $\{\phi_{s, j}\}_j$ the corresponding set of eigenfunctions, normalized in $L^2(\Omega)$. 

\medskip

\noindent $\bullet$~{\sl Common notation.}
In the sequel we use $\mathcal A$ to refer to any of the two types of operators $\mathcal A_{1,s}$ or $\mathcal A_{2,s}$, $0<s<1$. Each one is defined on a Hilbert space
\begin{equation} \label{defH}
H(\Omega)=\{u=\sum_{k=1}^\infty u_k \phi_{s,k} \in L^2(\Omega)\; : \; \| u \|^2_{H} = \sum_{k=1}^\infty \lambda_{s,k}\vert u_k\vert^2 <+\infty\}\subset L^2(\Omega)
\end{equation}
with values in its dual $H^*$. The notation in the formula copies the one just used for the second operator. When applied to the first one we put here  $\phi_{s,k}=\phi_k$, and $\lambda_{s,k}=\lambda_{k}^s$.
 Note that $H(\Omega)$ depends in principle on the type of operator and on the exponent $s$.  Moreover, the operator $\mathcal A$ is an isomorphism between $H$ and $H^*$, given by its action on the eigen-functions. 
It has been proved in \cite{BSV} (see also \cite{CDDS}) that
$$
H(\Omega)=
\left\{
\begin{aligned}
H^s(\Omega)&\qquad\text{if $s\in (0,1/2)$},\\
H^{1/2}_{00}(\Omega)&\qquad\text{if $s=1/2$},\\
H^s_{0}(\Omega)&\qquad\text{if $s\in (1/2,1)$},
\end{aligned}
\right.
$$

We now introdruce the Caffarelli-Silvestre extension for these operators. In the case of the restricted fractional laplacian, the extension is precisely the one described in Section \ref{extRn}. We now concentrate on the case of the spectral fractional laplacian. Let us define
\begin{align*}
\C &= \Omega\times(0,+\infty),\\
\partial_L \C&=\partial \Omega \times [0,+\infty ).
\end{align*}
We write points in the cylinder using the notation $(x,y)\in \C=\Omega \times (0,+\infty)$. Given $s\in(0,1)$, it has been proved in \cite{CDDS} (see also \cite{CabreTan}) that the following holds.

\begin{lemma}\label{extCylindrical}
Consider a weak solution of
\begin{equation}
\left\{
\begin{array}{lll}
\mbox{\rm div}(y^{1-2s} \nabla w)=0 &\mbox{in }\,\,\mathcal C= \Omega \times (0,+\infty),\\
w=0\,,\; &\mbox{on }\,\,\partial \Omega \times (0,+\infty) \\
\end{array}
\right.
\end{equation}
Then $-\lim_{y \to 0} y^{1-2s}\partial_y w=\mathcal A w(\cdot,0).$
where $\mathcal A$ is the spectral fractional laplacian.
\end{lemma}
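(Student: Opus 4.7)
The natural plan is to attack Lemma \ref{extCylindrical} by separation of variables in the eigenbasis of the Dirichlet Laplacian on $\Omega$. Let $\{\phi_k\}_{k\ge 1}$ denote an $L^2(\Omega)$-orthonormal basis of Dirichlet eigenfunctions with eigenvalues $0<\lambda_1\le\lambda_2\le\cdots$, and let $v=w(\cdot,0)=\sum_k \hat v_k \phi_k$. Since $w(\cdot,y)\in H^1_0(\Omega)$ for a.e.\ $y>0$, I expand
$$
w(x,y)=\sum_{k=1}^\infty a_k(y)\,\phi_k(x),
$$
so the lateral Dirichlet condition $w=0$ on $\partial_L\C$ is automatic. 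Plugging into $\mathrm{div}(y^{1-2s}\nabla w)=0$ and using $-\Delta_x\phi_k=\lambda_k\phi_k$, I get for each $k$ the Bessel-type ODE
$$
a_k''(y)+\frac{1-2s}{y}a_k'(y)-\lambda_k\, a_k(y)=0,\qquad y>0,
$$
coupled with the trace condition $a_k(0)=\hat v_k$ and the natural integrability at infinity (reflecting $w\in H^1(\C,y^{1-2s})$).

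Next I would scale out $\lambda_k$ by writing $a_k(y)=\hat v_k\,\theta(\sqrt{\lambda_k}\,y)$, where $\theta$ solves the universal ODE $\theta''+\frac{1-2s}{t}\theta'-\theta=0$ with $\theta(0)=1$ and $\theta\to 0$ at infinity. This ODE has two linearly independent solutions of the form $t^{s}I_{s}(t)$ and $t^{s}K_{s}(t)$ (modified Bessel functions), and the integrability/decay condition selects the $K_s$-branch, giving $\theta(t)=c\,t^s K_s(t)$ with $c$ chosen so that $\theta(0)=1$. The only fact I really need from this representation is the local behavior as $y\downarrow 0$:
$$
\theta(t)=1-\kappa_s\,t^{2s}+o(t^{2s}),\qquad t\to 0^+,
$$
for a universal constant $\kappa_s>0$, which follows from the Frobenius analysis of the ODE at the regular singular point $t=0$ (the two indicial roots are $0$ and $2s$, and the $t^{2s}$-branch enters with a computable coefficient).

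From this expansion,
$$
-\lim_{y\to 0^+} y^{1-2s}\,a_k'(y)
= -\hat v_k\,\lambda_k^{s}\,\lim_{t\to 0^+} t^{1-2s}\theta'(t)
= 2s\,\kappa_s\,\hat v_k\,\lambda_k^{s}.
$$
Summing in $k$ (and absorbing the universal constant $2s\kappa_s$ into the normalization of the extension, as is standard and consistent with the conventions in \cite{CDDS,CabreTan}) gives
$$
-\lim_{y\to 0^+}y^{1-2s}\partial_y w(x,y)=\sum_{k=1}^\infty \hat v_k\,\lambda_k^{s}\,\phi_k(x)=\mathcal{A}\,w(\cdot,0)(x),
$$
by the spectral definition \eqref{sLapl.Omega.Spectral} of $\mathcal{A}=\mathcal{A}_{1,s}$. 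The main technical step, and the place where care is needed, is justifying the passage to the limit inside the series: this requires showing that the tail of the series converges in $H^{-s}(\Omega)$ uniformly in $y$ near $0$, which follows from the exponential decay of $\theta$ at infinity combined with the bound $w\in H^1(\C,y^{1-2s})$ translating into $\sum_k \lambda_k^s|\hat v_k|^2<\infty$, i.e.\ $v\in H(\Omega)$.
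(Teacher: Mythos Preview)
The paper does not actually prove Lemma \ref{extCylindrical}; it merely quotes the result from \cite{CDDS} (see also \cite{CabreTan}). Your separation-of-variables argument, reducing to the Bessel-type ODE for each Fourier coefficient and selecting the decaying branch $t^s K_s(t)$, is exactly the method used in those references and is correct. The only point worth flagging is the multiplicative constant: as written, the lemma in the paper suppresses the factor $d_s$ that appears in Theorem \ref{realization}, whereas your computation correctly produces a universal constant $2s\kappa_s$ in front of $\lambda_k^s$; you handle this appropriately by noting that it is absorbed into the normalization of the extension, consistent with the conventions in \cite{CDDS,CabreTan}.
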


\subsection{The extension property in general frameworks}\label{extST}

To generalize the inequalities under consideration, one has to invoke a rather general version of the Caffarelli-Silvestre extension proved by Stinga and Torrea \cite{ST}. Their approach, based on semi-group theory, allows to prove the previous results in quite general ambient spaces, like Riemannian manifolds or Lie groups. 

In the following theorem, we will consider three cases later for the object $\mathcal M$:
\begin{enumerate}
\item The case of complete Riemannian manifolds and the Laplace-Beltrami operator
\item The case of Lie groups and the Kohn laplacian 
\item The case of the Wiener space and the Ornstein-Uhlenbeck operator 
\end{enumerate}

Let $\mathcal L$ be a positive and self-adjoint operator in 
$L^2(\mathcal M)$.  One can define its fractional powers 
by means of the standard formula in spectral theory  
\[
\displaystyle \mathcal L^{s}=\frac1{\Gamma(-s)}\int_0^\infty
\left(e^{t\mathcal L}-\mbox{Id}\right)\frac{dt}{t^{1+s}},
\]
where $s\in (0,1)$
and $e^{t\mathcal L}$ denotes the heat semi-group on $\mathcal M$. Then one has

\begin{theorem}\label{extension}
Let $u \in \text{dom}(\mathcal L^s)$. A solution of the extension problem 
\[
\left \{ \begin{array}{ll}
\displaystyle{\mathcal L v+ \frac{1-2s}{y}\partial_yv + \partial^2_yv=0}\qquad &\mbox{on } \, \mathcal M\times \R^+
\\ \\ 
v(x,0)=u &\mbox{on } \, \mathcal M, \\
\end{array} \right . 
\]
is given by 
\[
v(x,y)=\frac{1}{\Gamma(s)}\int_0^\infty e^{t\mathcal L} (\mathcal L^su)(x)e^{-y^2/4t}\frac{dt}{t^{1-s}}
\]
and furthermore, one has at least in the distributional sense 
\begin{equation}\label{eqcs}
-\lim_{y\to 0^+} y^{1-2s}\partial_yv(x,y)=
\frac{2s\Gamma(-s)}{4^s\Gamma(s)}\mathcal L^s u(x). 
\end{equation}
\end{theorem}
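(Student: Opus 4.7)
The plan is a direct computational verification that $v$, as defined by the integral formula, satisfies the three requirements: the trace $v(\cdot,0)=u$, the degenerate elliptic PDE on $\mathcal{M}\times\mathbb{R}^+$, and the weighted Neumann identification \eqref{eqcs}. The engine throughout is the pairing of the heat semigroup identity $\partial_t e^{-t\mathcal{L}}=-\mathcal{L}e^{-t\mathcal{L}}$ with elementary Gamma-function integrals; I read the exponential in the statement as the heat semigroup $e^{-t\mathcal L}$, the only interpretation compatible with positivity of $\mathcal L$ in the definition of $\mathcal L^s$.

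For the trace, setting $y=0$ collapses the Gaussian factor to $1$ and one recognizes
\[
v(x,0)=\frac{1}{\Gamma(s)}\int_0^\infty e^{-t\mathcal{L}}(\mathcal{L}^s u)(x)\,t^{s-1}\,dt
\]
as the standard spectral representation $\mathcal{L}^{-s}=\frac{1}{\Gamma(s)}\int_0^\infty e^{-t\mathcal{L}}t^{s-1}\,dt$ applied to $\mathcal{L}^s u$, whence $v(x,0)=u$. For the PDE itself I differentiate twice under the integral in $y$, obtaining a weight of the form $\bigl(\tfrac{y^2}{4t^2}-\tfrac{1-s}{t}\bigr)e^{-y^2/4t}t^{s-1}$ inside the integrand of $\partial_y^2 v+\frac{1-2s}{y}\partial_y v$. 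On the other hand, rewriting $\mathcal L v$ via $\mathcal{L}e^{-t\mathcal{L}}=-\partial_t e^{-t\mathcal{L}}$ and integrating by parts in $t$ (the boundary terms vanish at $t=0$ because $e^{-y^2/4t}$ kills the $t^{s-1}$ singularity for $y>0$, and at $t=\infty$ by spectral decay), the derivative $\partial_t[e^{-y^2/4t}t^{s-1}]$ reproduces precisely the same weight. Matching the two expressions yields the extension equation.

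For the Neumann identification I compute $\partial_y v$ explicitly, multiply by $y^{1-2s}$, and perform the change of variable $r=y^2/(4t)$. The powers of $y$ cancel exactly, leaving
\[
y^{1-2s}\partial_y v=-\frac{4^{1-s}}{2\Gamma(s)}\int_0^\infty e^{-(y^2/4r)\mathcal L}(\mathcal L^s u)\,e^{-r}r^{-s}\,dr.
\]
Passing $y\to 0^+$ by strong continuity of the semigroup and evaluating the remaining Euler integral as $\Gamma(1-s)$, then invoking the reflection identity $\Gamma(1-s)=-s\Gamma(-s)$, reconstructs the explicit constant stated in \eqref{eqcs}.

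The only genuine obstacle is analytic rather than algebraic: justifying each interchange of differentiation and integration, the passage to the limit, and the integration by parts in $t$. This is routine on a spectrally truncated dense class of $u$ for which $\mathcal L^s u$ is smooth and sufficiently decaying, and one concludes in full generality by density in $\mathrm{dom}(\mathcal L^s)$ together with $L^2$-contractivity of the heat semigroup, which is available on each of the three ambient spaces of interest (complete Riemannian manifolds, unimodular Lie groups with the Kohn sub-Laplacian, and the Wiener space with the Ornstein--Uhlenbeck operator).
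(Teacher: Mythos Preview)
Your computational verification is correct and is essentially the standard argument: the identity $\partial_t\bigl(e^{-y^2/4t}t^{s-1}\bigr)=\bigl(\partial_y^2+\tfrac{1-2s}{y}\partial_y\bigr)e^{-y^2/4t}t^{s-1}$ together with $\partial_t e^{-t\mathcal L}=-\mathcal L e^{-t\mathcal L}$ and an integration by parts in $t$ gives the PDE, while the change of variables $r=y^2/(4t)$ isolates the constant in the Neumann limit. The paper does not actually supply its own proof of this theorem; it is quoted from Stinga--Torrea \cite{ST}, whose argument is exactly the semigroup computation you carry out here, so your approach coincides with the intended one. One minor point: with the interpretation $e^{t\mathcal L}=e^{-t\mathcal L}$ that you (correctly) adopt, the PDE you actually verify is $-\mathcal L v+\tfrac{1-2s}{y}\partial_y v+\partial_y^2 v=0$ and the Neumann constant comes out with the opposite sign to the one printed; these are harmless notational inconsistencies in the statement itself rather than flaws in your argument.
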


\section{Proofs of Theorem \ref{CC} and \ref{kato}}

\subsection{Proof of Theorem \ref{CC}}
We now come to the proof of Theorem \ref{CC}. We introduce the function 
$$
\tilde w = \varphi(w)-v
$$
where $w$ is the Caffarelli-Silvestre extension of $u$ and $v$ the Caffarelli-Silvestre extension of $\varphi(u)$. Then $\tilde w$ satisfies 
\begin{equation*}
\left \{
\begin{array}{c}
L_a \tilde w=y^a \varphi '' (w) |\nabla w|^2 \geq 0, \,\,\,\,\,\mbox{in}\,\,\,\RR^{n+1}_+\\
\tilde w=0\,\,\,\,\,\mbox{on}\,\,\,\partial \RR^{n+1}_+
\end{array} \right . 
\end{equation*}
since $\varphi $ is convex. Hence by the Hopf lemma in \cite{CS1} (see also the Appendix) ( notice $\tilde w \geq 0$ by the weak maximum principle) , one has $\frac{\partial \tilde w}{\partial \nu^a} > 0$, hence the result.

\subsection{Proof of Theorem \ref{kato}}

We now turn to the proof of the Kato inequality in Theorem \ref{kato}. This is a consequence of the Cordoba-Cordoba inequality. Indeed consider the convex function 
$$
\varphi_\epsilon(x)=\sqrt{x^2+\epsilon^2}. 
$$
Then the result follows by Theorem \ref{CC} and a standard approximation argument. 
\subsection{The results in bounded domains}

In the case of the spectral laplacian, the C\'ordoba-C\'ordoba estimate has been proved by Constantin and Ignatova \cite{constantin} by a rather involved use of semi-group theory. Our proof has the same flavour as the one of Theorem \ref{CC}. Furthermore, in our framework, one can also prove the C\'ordoba-C\'ordoba estimate in the case of the restricted laplacian, which is not covered by \cite{constantin}. 

\begin{thm}\label{CCbdd}
Let $\varphi$ be a $C^2(\RR^n)$ convex function. Assume that $u$ and $\varphi(u)$ are such that $\mathcal A u $ and $\mathcal A \varphi(u) $ exist where $\mathcal A$ is either the restricted or spectral fractional laplacian. Then the following holds
\begin{equation}\label{CCeq}
\mathcal A \varphi(u) \leq \varphi'(u) \mathcal A \,u 
\end{equation}
\end{thm}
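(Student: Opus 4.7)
The overall strategy is to repeat the extension-based argument used for Theorem \ref{CC}, but with the Caffarelli-Silvestre-type extension chosen according to whether $\mathcal{A}$ is the restricted or the spectral fractional Laplacian. In each case one introduces the auxiliary function
$$
\tilde w = \varphi(w) - v,
$$
where $w$ is the extension of $u$ and $v$ is the extension of $\varphi(u)$, checks that it satisfies the degenerate-elliptic inequality $L_a \tilde w \ge 0$ with vanishing trace on the base, and invokes the weighted maximum principle together with the Hopf lemma of \cite{CS1} to extract the desired sign on the weighted normal derivative $-\lim_{y\to 0^+} y^{1-2s}\partial_y \tilde w$.

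For the restricted fractional Laplacian $\mathcal{A}_{2,s}$, no new ingredient is needed: by definition this is $(-\Delta_{\RR^n})^s$ acting on functions that vanish outside $\Omega$, and its extension is the one of Section \ref{extRn} on $\RR^{n+1}_+$. Extending $u$ and $\varphi(u)$ by zero outside $\Omega$ reduces the statement verbatim to Theorem \ref{CC}. For the spectral fractional Laplacian $\mathcal{A}_{1,s}$, one uses instead the cylindrical extension of Lemma \ref{extCylindrical} on $\mathcal{C} = \Omega \times (0,+\infty)$; the computation
$$
L_a \tilde w = \textrm{div}(y^{1-2s}\nabla \tilde w) = y^{1-2s} \varphi''(w)|\nabla w|^2 \ge 0
$$
goes through unchanged since $v$ is $L_a$-harmonic and $\varphi$ is convex, and $\tilde w = 0$ on the base $\Omega \times \{0\}$. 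The Hopf lemma then pins down the sign of $-\lim_{y\to 0^+} y^{1-2s}\partial_y \tilde w$, which by Lemma \ref{extCylindrical} is, up to a positive constant, exactly $\varphi'(u)\mathcal{A}_{1,s}u - \mathcal{A}_{1,s}\varphi(u)$.

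The main obstacle, which is absent in the half-space setting of Theorem \ref{CC}, is the presence of the lateral boundary $\partial\Omega \times (0,+\infty)$ of the cylinder. Both $w$ and $v$ vanish there, so $\tilde w = \varphi(0)$ on the lateral boundary, and the maximum principle on $\mathcal{C}$ requires this datum to be compatible with the vanishing trace on $\Omega \times \{0\}$. This is automatic under the implicit hypothesis that $\varphi(u)$ belongs to the domain $H(\Omega)$ of $\mathcal{A}_{1,s}$ (which, together with $u|_{\partial\Omega}=0$, forces $\varphi(0)=0$); otherwise one reduces to this case by subtracting off an affine correction from $\varphi$, keeping in mind that constants and linear functions have nontrivial images under the spectral operator $\mathcal{A}_{1,s}$ so that the correction must be distributed carefully between the two sides of the inequality. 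Once this lateral compatibility is in place, the argument is a direct transcription of the proof of Theorem \ref{CC}.
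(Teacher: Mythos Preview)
Your proposal is correct and follows essentially the same approach as the paper: for the restricted operator one invokes Theorem~\ref{CC} verbatim, and for the spectral operator one runs the same computation with the cylindrical extension of Lemma~\ref{extCylindrical}, obtaining $L_a\tilde w\ge 0$ in $\mathcal C$, $\tilde w=0$ on $\{y=0\}$ and on $\partial_L\mathcal C$, and concludes via the weak maximum principle and the Hopf lemma. Your discussion of the lateral boundary is in fact more careful than the paper's, which simply writes $\tilde w=0$ on $\partial_L\mathcal C$ without comment; as you note, this uses that $\varphi(0)=0$, an implicit consequence of the hypothesis that $\varphi(u)$ lies in the domain of $\mathcal A_{1,s}$ (and the assumption made in \cite{constantin} as well).
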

\begin{proof}
The case of the retricted laplacian is fully covered by the proof of Theorem \ref{CC} verbatim. In the case of the spectral fractional laplacian, one considers as before 
$$
\tilde w = \varphi(w)-v
$$
where $w$ is the Caffarelli-Silvestre extension of $u$ and $v$ the Caffarelli-Silvestre extension of $\varphi(u)$ where the Caffarelli-Silvestre extension is the one described in Section \ref{extCylindrical}. Then $\tilde w$ satisfies 
\begin{equation*}
\left \{
\begin{array}{c}
L_a \tilde w=y^a \varphi '' (w) |\nabla w|^2 \geq 0, \,\,\,\,\,\mbox{in}\,\,\,\C\\
\tilde w=0\,\,\,\,\,\mbox{on}\,\,\,\partial_L \C \\
\tilde w=0\,\,\,\,\,\mbox{on}\,\,\,\left \{ y=0 \right \}
\end{array} \right . 
\end{equation*}
By the weak maximum principle, one has $\tilde w\geq 0$ in $\C$ and one concludes with the Hopf lemma in the appendix.  
\end{proof}

\begin{remark}
Our proof of the estimate is the same as the one in  C\'ordoba and Mart\'inez in \cite{CordobaAdv} for the Dirichlet-to-Neumann operator. However, their proof covers only the case $1/2$ and for power-like convex functions. The argument can be actually generalized as we mentioned. Furthermore, it unifies all the possible proofs of the C\'ordoba-C\'ordoba estimates. 
\end{remark}

\section{Geometric ambiebent spaces}

\subsection{The case of manifolds}

 The case of compact manifolds, through a parabolic argument, has been proved by Cordoba and Mart\'inez \cite{CordobaAdv}. Our proof once again completely unifies the several approaches. Consider a complete Riemannian manifold $\mathcal M$ and its Laplace-Beltrami operator
 $$
 \mathcal L=-\Delta_g
 $$
 Invoking now the extension of Stinga and Torrea described in Section \ref{extST}, one proves
 
 \begin{thm}\label{CCA}
Let $\varphi$ be a $C^2(\RR^n)$ convex function. Assume that $u$ and $\varphi(u)$ are such that $\mathcal L u $ and $\mathcal L \varphi(u) $ exist. Then the following holds
\begin{equation}\label{CCeq}
\mathcal L \varphi(u) \leq \varphi'(u) \mathcal L \,u 
\end{equation}
\end{thm}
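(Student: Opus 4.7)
The plan is to mirror the arguments given for Theorems \ref{CC} and \ref{CCbdd}, substituting the Euclidean Caffarelli-Silvestre extension by the Stinga-Torrea extension of Theorem \ref{extension}. Let $w$ denote the extension of $u$ and $v$ the extension of $\varphi(u)$ on $\mathcal M \times \R^+$; each satisfies
$$\mathcal L z + \frac{1-2s}{y}\partial_y z + \partial_y^2 z = 0$$
in the interior, with boundary traces $w(\cdot,0) = u$ and $v(\cdot,0) = \varphi(u)$. Multiplying through by $y^{1-2s}$ and using $\mathcal L = -\Delta_g$, this equation can be recast, in the product metric $\bar g = g \oplus dy^2$ on $\mathcal M \times \R^+$, as
$$L_a z := \mathrm{div}_{\bar g}(y^{1-2s}\nabla_{\bar g} z) = 0,$$
i.e.\ the weighted-divergence form that makes the standard Caffarelli-Silvestre toolkit available.

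Next I would form the comparison function $\tilde w = \varphi(w) - v$. A direct chain-rule computation, together with $L_a w = L_a v = 0$ and the convexity of $\varphi$, yields
$$L_a \tilde w = y^{1-2s} \varphi''(w) |\nabla_{\bar g} w|^2 \geq 0,$$
while at the boundary $\{y=0\}$ we have $\tilde w(x,0) = \varphi(u(x)) - \varphi(u(x)) = 0$. The weak maximum principle on $\mathcal M \times \R^+$ then forces $\tilde w$ to have a constant sign, and the Hopf boundary lemma of \cite{CS1} (recalled in the Appendix) applied at $y = 0$ produces a definite sign on the conormal limit $-\lim_{y\to 0^+} y^{1-2s}\partial_y \tilde w$. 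Decomposing $\partial_y \tilde w = \varphi'(w)\partial_y w - \partial_y v$ and invoking the identification \eqref{eqcs} of Theorem \ref{extension} once on $w$ and once on $v$ then converts this sign into the desired pointwise bound $\mathcal L^s \varphi(u) \leq \varphi'(u)\,\mathcal L^s u$.

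The principal obstacle I anticipate is the first step: the weak maximum principle on a non-compact complete Riemannian manifold is not automatic, since one must rule out loss of control at infinity both along $\mathcal M$ and as $y \to \infty$. A natural remedy is to exploit the hypothesis $u,\varphi(u) \in \mathrm{dom}(\mathcal L^s)$ together with the explicit representation
$$v(x,y) = \frac{1}{\Gamma(s)}\int_0^\infty e^{t\mathcal L}(\mathcal L^s u)(x)\, e^{-y^2/4t}\,\frac{dt}{t^{1-s}}$$
supplied by Theorem \ref{extension}, which provides Gaussian decay in $y$ and $L^2(\mathcal M)$-control in $x$---precisely the ingredients needed for a Phragm\'en--Lindel\"of variant of the maximum principle. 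Once that decay is secured, the maximum principle, the Hopf lemma, and the passage to fractional boundary data proceed exactly as in Theorem \ref{CCbdd}, the present case differing from the Euclidean one only in notation.
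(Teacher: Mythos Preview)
Your proposal is correct and follows exactly the route the paper indicates: the paper's proof of Theorem~\ref{CCA} consists of the single remark that the argument is \emph{identical} to that of Theorem~\ref{CC} once the Stinga--Torrea extension of Section~\ref{extST} replaces the Euclidean one, and you have spelled this out in detail. Your observation about the weak maximum principle on a non-compact complete manifold is a genuine technical point that the paper does not address; your suggested remedy via the integral representation and a Phragm\'en--Lindel\"of argument is a reasonable way to close it, though the paper simply takes this for granted.
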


We then recover the case of compact manifolds in \cite{CordobaAdv} and even generalize it to complete non-compact manifolds. The proof of the previous theorem is identical, once the extension is well defined as described above (see \cite{ST}), to the proof of Theorem \ref{CC}. 

\subsection{The case of Lie groups}

Consider a Lie group $G$ with its Kohn Laplacian
$$
 \mathcal L=-\Delta_G
 $$
 
  Invoking now the extension of Stinga and Torrea described in Section \ref{extST}, one proves
 
 \begin{thm}\label{CCB}
Let $\varphi$ be a $C^2(\RR^n)$ convex function. Assume that $u$ and $\varphi(u)$ are such that $\mathcal L u $ and $\mathcal L \varphi(u) $ exist. Then the following holds
\begin{equation}\label{CCeq}
\mathcal L \varphi(u) \leq \varphi'(u) \mathcal L \,u 
\end{equation}
\end{thm}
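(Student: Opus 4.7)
The plan is to follow verbatim the proof of Theorem \ref{CC} (and the discussion of Theorem \ref{CCA}), now replacing the Caffarelli--Silvestre extension on $\mathbb{R}^{n+1}_+$ by the Stinga--Torrea extension of Theorem \ref{extension} on the cylinder $G\times\mathbb{R}^+$, applied to the positive self-adjoint operator $-\Delta_G$. First I would let $w$ and $v$ denote the Stinga--Torrea extensions of $u$ and $\varphi(u)$, so that both solve the degenerate elliptic equation of Theorem \ref{extension} on $G\times\mathbb{R}^+$ with boundary traces $u$ and $\varphi(u)$ respectively, and set $\tilde w := \varphi(w) - v$.

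The central computation is then the chain rule for the Kohn sub-Laplacian $\Delta_G=\sum_{i=1}^k X_i^2$: since each $X_i$ is a first-order left-invariant vector field,
\[
\Delta_G\varphi(w)=\varphi''(w)\sum_{i=1}^k (X_iw)^2+\varphi'(w)\Delta_G w,
\]
and analogously $\partial_y^2\varphi(w)=\varphi''(w)(\partial_y w)^2+\varphi'(w)\partial_y^2 w$. Substituting these into the extension equation for $\varphi(w)$ and using that $w$ itself solves the same equation, the $\varphi'$-contributions cancel identically and the extension operator applied to $\tilde w$ is seen to be proportional to $\varphi''(w)\bigl[\sum_i (X_iw)^2+(\partial_y w)^2\bigr]\ge 0$ by convexity of $\varphi$. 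In divergence form this reads $L_a\tilde w\ge 0$ on $G\times\mathbb{R}^+$ with $\tilde w\equiv 0$ on $\{y=0\}$. The weak maximum principle then fixes the sign of $\tilde w$ in the interior, and Hopf's boundary point lemma applied at $y=0$ yields $\partial\tilde w/\partial\nu^a>0$. Applying the boundary identity \eqref{eqcs} to both $w$ and $v$, together with the chain rule $\partial_y\varphi(w)=\varphi'(w)\partial_y w$ (which passes to the weighted limit because $\varphi'(w(\cdot,y))\to\varphi'(u)$ as $y\to 0^+$), this inequality translates into exactly $\mathcal{L}\varphi(u)\le\varphi'(u)\mathcal{L}u$ with $\mathcal{L}=(-\Delta_G)^s$, as claimed.

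The main obstacle is that the weak maximum principle and Hopf's boundary point lemma of \cite{CS1} are stated on the Euclidean half-space $\mathbb{R}^{n+1}_+$, whereas here the horizontal variable lives on the Lie group $G$ with its Carnot--Carath\'eodory geometry. Since the Hopf argument is purely local --- it only needs comparison with an explicit barrier inside a small half-cylinder over a CC-ball together with an interior-touching condition at the boundary point --- it should adapt provided one has the standard local degenerate elliptic theory for the extension operator on $G\times\mathbb{R}^+$, which is available under the H\"ormander bracket condition on $\{X_1,\dots,X_k\}$. A secondary technical issue is the behavior at infinity when $G$ is non-compact, particularly in the exponential volume growth regime: to apply the weak maximum principle on the unbounded cylinder one must impose sufficient decay on $u$, or alternatively work with the explicit heat-semigroup representation of the extension provided by Theorem \ref{extension}.
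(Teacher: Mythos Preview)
Your proposal is correct and follows precisely the route the paper takes: the paper does not write out a separate proof for Theorem~\ref{CCB} but simply says the argument is identical to that of Theorem~\ref{CC} once the Stinga--Torrea extension on $G\times\RR^+$ is in hand, and your write-up spells out exactly that argument (introduce $\tilde w=\varphi(w)-v$, compute $L_a\tilde w\ge 0$ via the chain rule for $\Delta_G$, apply the weak maximum principle and then the Hopf lemma). Your remarks on adapting Hopf's lemma to the cylinder over $G$ also match the paper's Appendix, which notes that the Hopf argument depends only on the structure of the equation and extends to cylinders $\mathcal M\times(0,+\infty)$.
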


\subsection{The case of the Wiener space}
We start by recalling the basic notions about the Wiener space and its associated operators. 
An abstract Wiener space is defined as a triple $(X,\gamma,H)$ where
$X$ is a separable Banach space, endowed with the norm $\|\cdot\|_X$,
$\gamma$ is a nondegenerate centred Gaussian measure,
and $H$ is the Cameron--Martin space associated with the measure $\gamma$, that is,
$H$ is a separable Hilbert space densely embedded in $X$, endowed with the inner product
$[\cdot, \cdot ]_H$ and with the norm $|\cdot |_H$. The
requirement that $\gamma$ is a centred Gaussian measure means that
for any $x^*\in X^*$, the measure $x^*_\#\gamma$ is a centred Gaussian
measure on the real line $\RR$, that is, the Fourier transform
of $\gamma$ is given by
\[
\hat \gamma(x^*) = \int_X 
e^{-i\scal{x}{x^*}}\, d\gamma (x)=\exp\left(
-\frac{\scal{Qx^*}{x^*}}{2}
\right),\qquad \forall x^*\in X^*;
\]
here the operator $Q\in {\mathcal L}(X^*,X)$ is the covariance operator and it is
uniquely determined by the formula
\[
\scal{Qx^*}{y^*}=\int_X \scal{x}{x^*}\scal{x}{y^*}d\gamma(x),\qquad \forall x^*,y^*\in X^*.
\]
The nondegeneracy of $\gamma$ implies that $Q$ is positive definite: the boundedness
of $Q$ follows by Fernique's Theorem, 
asserting that there exists a positive number $\beta>0$
such that
\[
 \int_X e^{\beta\|x\|^2}d\gamma(x)<+\infty.
\]
This implies also that the maps $x\mapsto \scal{x}{x^*}$ belong to 
$L^p_\gamma(X)$ for any $x^*\in X^*$ and $p\in [1,+\infty)$, where $L^p_\gamma(X)$ 
denotes the space of all $\gamma$-measurable functions $f:X\to \R$
such that 
\[
\int_X |f(x)|^p d\gamma(x)<+\infty.
\]
In particular, any element $x^*\in X^*$ can be seen as a map $x^*\in L^2_\gamma(X)$, and
we denote by $R^*: X^*\to {\mathcal H}$ the identification map $R^*x^*(x):=\scal{x}{x^*}$.
The space ${\mathcal H}$ given by the closure of $R^*X^*$ in $L^2_\gamma(X)$
is usually called reproducing kernel. 
By considering the map $R: {\mathcal H}\to X$
defined as
\[
R\hat{h} := \int_X \hat{h}(x)x\, d\gamma(x),
\]
we obtain that $R$ is an injective $\gamma$--Radonifying operator, which is
Hilbert--Schmidt when $X$ is Hilbert. We also have  
$Q=RR^*:X^*\to X$. The space $H:=R{\mathcal H}$, equipped with the inner product $[\cdot,\cdot]_H$
and norm $|\cdot|_H$ induced by ${\mathcal H}$ via $R$, is the Cameron-Martin space  
and is a dense subspace of $X$. The continuity of $R$ implies
that the embedding of $H$ in $X$ is continuous, that is, there exists $c>0$
such that
\[
 \|h\|_X \leq c|h|_H,\qquad \forall h\in H.
\] 
We have also that the measure $\gamma$ is absolutely continuous with respect
to translation along Cameron--Martin directions; in fact, for $h\in H$, $h=Qx^*$, 
the measure
$\gamma_h(B)=\gamma(B-h)$ is absolutely continuous with respect to $\gamma$ with
density given by
\[
d\gamma_h(x)=\exp\left(
\scal{x}{x^*}-\frac{1}{2}|h|_H^2
\right)d\gamma(x) .
\]

For $j\in \N$ we choose $x^*_j\in X^*$ in such a way that $\hat h_j:= R^*x_j^*$, or equivalently 
$h_j:=R\hat h_j=Qx^*_j$, form an orthonormal basis of $H$. We order the vectors $x^*_j$ in such 
a way that the numbers $\lambda_j:=\|x^*_j\|_{X^*}^{-2}$ form a non-increasing sequence.
Given $m\in\mathbb N$, we also let $H_m:=\langle h_1,\ldots, h_m\rangle\subseteq H$, 
and $\Pi_m: X\to H_m$ be the closure of the orthogonal projection from $H$ to $H_m$
\[
\Pi_m(x) := \sum_{j=1}^m \scal{x}{x^*_j}\, h_j \qquad x\in X.
\]
The map $\Pi_m$ induces the decomposition $X\simeq H_m\oplus X_m^\perp$, with $X_m^\perp:= {\rm ker}(\Pi_m)$,
and $\gamma=\gamma_m\otimes\gamma_m^\perp$,
with $\gamma_m$ and $\gamma_m^\perp$ Gaussian measures on $H_m$ and $X_m^\perp$ respectively, 
having $H_m$ and $H_m^\perp$ as Cameron--Martin spaces. 
When no confusion is possible we identify $H_m$ with $\RR^m$;
with this identification the measure
$\gamma_m={\Pi_m}_\#\gamma$ is the standard Gaussian measure on $\RR^m$ (see \cite{B}).
Given $x\in X$,
we denote by $\underline x_m\in H_m$ the projection $\Pi_m(x)$, 
and by $\overline x_m\in X_m^\perp$ the infinite dimensional component of $x$, so that 
$x=\underline x_m+\overline x_m$.
When we identify $H_m$ with $\RR^m$ we rather write 
$x=(\underline x_m,\overline x_m)\in \RR^m\times X_m^\perp$.

We say that $u:X\to \RR$ is a {\em cylindrical function} if $u(x)=v(\Pi_m (x))$ for some $m\in\mathbb N$ and 
$v:\RR^m\to \RR$. 
We denote by $\FCbk(X)$, $k\in\mathbb N$, 
the space of all $C^k_b$ cylindrical functions, that is, functions of the form $v(\Pi_m (x))$
with $v\in C^k(\RR^n)$, with continuous and bounded derivatives up to the order $k$. 
We denote by $\FCbk(X,H)$ the space generated by all functions of the form 
$u h$, with $u\in \FCbk(X)$ and $h\in H$.

Given $u\in \Ldeu$, we consider the canonical cylindrical approximation $\bEm$ given by
\begin{equation}\label{cancylapprox}
\bEm u (x)=\int_{X_m^\perp} u(\Pi_m(x),y) \,d\gga_m^\perp(y).
\end{equation}
Notice that $\bEm u$ depends only on the first $m$ variables 
and $\bEm u$ converges  to $u$ in $L^p_\gamma(X)$ for all $1\leq p<\infty$.

We let
\[
\begin{array}{ll}
\displaystyle{\nabla_\gamma u := \sum_{j\in\mathbb N}\partial_j u\, h_j} & {\rm for\ } u\in \FCb(X)
\\
\\
\displaystyle{\divg \varphi := \sum_{j\geq 1}\partial^*_j [\varphi,h_j]_H} & {\rm for\ }\varphi\in \FCb(X,H)
\\
\\
\displaystyle{\Dg u := \divg\nabla_\gamma u} & {\rm for\ } u\in \FCbt(X)
\end{array}
\]
where $\partial_j := \partial_{h_j}$ and
$\partial_j^* := \partial_j - \hat h_j$ is the adjoint operator of $\partial_j$.
With this notation, the following integration by parts formula holds:
\begin{equation}\label{inp}
\int_X u\, \divg \varphi\,d\gamma = -\int_X [\nabla_\gamma u,\varphi]_H\, d\gamma
\qquad \forall \varphi\in \FCb(X,H).
\end{equation}
In particular, thanks to \eqref{inp}, the operator $\nabla_\gamma$ is closable in $L^p_\gamma(X)$,
and we denote by $W^{1,p}_\gamma(X)$ the domain of its closure. The Sobolev spaces 
$W^{k,p}_\gamma(X)$, with $k\in\mathbb N$ and $p\in [1,+\infty]$, can be defined analogously \cite{B},
and $\FCbk(X)$ is dense in $W^{j,p}_\gamma(X)$, for all $p<+\infty$ and $k,j\in\mathbb N$ with $k\ge j$.

Given a vector field $\varphi \in L^{p}_\gamma(X;H)$, $p\in (1,\infty]$, using \eqref{inp} we can define
$\mathrm{div}_\gamma \, \varphi$ in the distributional sense,
taking test functions $u$ in  $W^{1,q}_\gamma(X)$ with
$\frac{1}{p}+\frac{1}{q} = 1$. We say that
$\mathrm{div}_\gamma\, \varphi \in L^p_\gamma(X)$ if this linear functional can be extended to all test 
functions $u\in L^{q}_\gamma(X)$. This is true in particular if $\varphi\in W^{1,p}_\gamma(X;H)$.

Let $u\in W^{2,2}_\gamma(X)$, $\psi\in \FCb(X)$ and $i,j\in \mathbb N$.
{}From \eqref{inp}, with $u=\partial_j u$ and $\varphi=\psi h_i$, we get
\begin{equation}\label{parts}
\int_X \partial_j u\,\partial_{i}\psi \,d\gamma = 
\int_X -\partial_i(\partial_{j}u)\,\psi+ \partial_ju\,\psi\langle x,x^*_i\rangle d\gamma
\end{equation}
Let now $\varphi\in \FCb(X,H)$. If we apply \eqref{parts} with $\psi=[\varphi,h_j]_H=:\varphi^j$, we obtain 
\[
\int_X \partial_j u\,\partial_{i}\varphi^j \,d\gamma = 
\int_X -\partial_j(\partial_{i}u)\,\varphi^j
+ \partial_ju\,\varphi^j\langle x,x^*_i\rangle d\gamma
\]
which, summing up in $j$, gives 
\[
\int_X [\nabla_\gamma u,\partial_i \varphi]_H\,d\gamma = \int_X -[\nabla_\gamma (\partial _i u), \varphi]_H 
+  [\nabla_\gamma u,\varphi]_H \langle x,x^*_i\rangle d\gamma
\]
for all $\varphi\in \FCb(X,H)$.

The operator $\Dg:W^{2,p}_\gamma(X)\to L^p_\gamma(X)$ is usually called the 
Ornstein-Uhlenbeck operator on $X$.
Notice that, if $u$ is a cylindrical function, that is $u(x)=v(y)$ with  
$y=\Pi_m(x)\in\R^m$ and $m\in\mathbb N$,
then
\[
\Dg u = \sum_{j=1}^m \partial_{jj}u-\langle x,x_j^*\rangle\partial_{j}u = 
\Delta v - \langle \nabla v,y \rangle_{\R^m}\,.
\]
We write $u\in C(X)$ if $u: X\to \R$ is continuous and $u\in C^1(X)$ if
both $u: X\to \R$ and $\nabla_\gamma u:X\to H$ are continuous. 

{F}or simplicity of notation, from now on we omit the explicit dependence on $\gamma$ 
of operators and spaces.
We also indicate by $[\cdot, \cdot ]$ and $|\cdot |$ respectively the inner product and the norm in $H$.

By means of Section \ref{extST}, one can prove an extension property for the operator $(-\Delta_\gamma)^s$ and one proves in this case also a C\'ordoba-C\'ordoba estimate.

\section{Appendix}
In this appendix, we provide the Hopf lemma, which is crucial in the proof of the estimates. We state the theorem in the case of $\RR^n$ as stated in \cite{CS}. However, an inspection of the proof shows that it is extendable to cylinders $\mathcal M \times (0,+\infty)$ where $\mathcal M$ is one of the cases covered in the present note and the associated operators. Indeed, the geometry is always the same and the Hopf lemma just depends on the structure of the equation.

We start with some notations. We introduce
\begin{align*}
& B_R^+=\{ (x,y)\in\R^{n+1} : y>0, |(x,y)|<R\}, \\
& \Gamma_R^0=\{ (x,0)\in\partial\R^{n+1}_+ : |x|<R\}, \\
& \Gamma_R^+=\{ (x,y)\in\R^{n+1} : y\ge 0, |(x,y)|=R\}. 
\end{align*}
\begin{lemma}\label{hopf}
Consider the cylinder 
$\mathcal C_{R,1}= \Gamma_R^0 \times (0,1) \subset \RR^{n+1}_+$ where $\Gamma_R^0$ 
is the ball of center $0$ and radius $R$ in $\RR^n$. Let $u \in C(\overline{\mathcal C_{R,1}}) 
\cap H^1(\mathcal C_{R,1},y^a)$ satisfy  
\begin{equation*}
\begin{cases}
L_a u \leq 0&\text{ in } \mathcal C_{R,1} \\
u> 0&\text{ in } \mathcal C_{R,1} \\ 
u(0,0)=0.&
\end{cases}
\end{equation*}

Then, 
$$\limsup_{y \rightarrow 0^+ }-y^a \frac{u(0,y)}{y}<0.$$
In addition, if $y^a u_y \in C(\overline{\mathcal C_{R,1}})$, then 
$$\partial_{\nu^a} u (0,0) <0. $$ 
\end{lemma}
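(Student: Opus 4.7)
The plan is to prove the Hopf-type bound by a barrier argument, keyed on the algebraic identity
\[
L_a\bigl(y^{2s}\psi\bigr) \;=\; y\,\Delta\psi + (2s+1)\,\partial_y\psi \qquad (a=1-2s),
\]
valid for every smooth $\psi$. This turns the degenerate operator acting on the ansatz $y^{2s}\psi$ into a non-degenerate operator on $\psi$, and it is exactly what lets me write down an $L_a$-sub-solution that vanishes at $(0,0)$ at the critical rate $y^{2s}$ rather than at the faster rate $y^{2s+1}$ that would arise from classical Hopf-type barriers like $e^{-\alpha|X-X_0|^2}-e^{-\alpha\rho^2}$.

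\textbf{Construction and comparison.} I would localize to a half-ball $B_\rho^+=\{|x|^2+y^2<\rho^2,\ y>0\}\subset\mathcal C_{R,1}$ with $\rho<\min(R,1)$ small. Using $u>0$ in $\mathcal C_{R,1}$, continuity, and compactness, one obtains $u\ge m>0$ on the spherical part $\Gamma_\rho^+$ of $\partial B_\rho^+$ (controlling $u$ near the equator $\{|x|=\rho,\,y=0\}$ uses that $(0,0)$ is an isolated zero of $u$ on $\partial\mathcal C_{R,1}$, which holds in the intended applications). For $\lambda\in(0,1/\rho^2)$, set
\[
\phi(x,y) \;=\; y^{2s}\bigl[\,1+\lambda(|X|^2-\rho^2)\,\bigr], \qquad X=(x,y).
\]
Then $\phi\ge 0$ on $\overline{B_\rho^+}$, $\phi\equiv 0$ on $\Gamma_\rho^0$, $\phi\equiv y^{2s}$ on $\Gamma_\rho^+$, and the identity above (with $\Delta\psi=2\lambda(n+1)$, $\partial_y\psi=2\lambda y$) gives
\[
L_a\phi \;=\; 2\lambda(n+2+2s)\,y \;\ge\; 0.
\]
Picking $\varepsilon>0$ with $\varepsilon\rho^{2s}\le m$, one has $\varepsilon\phi\le u$ on $\partial B_\rho^+$, and since $L_a(\varepsilon\phi-u)\ge 0$ in $B_\rho^+$, the weak maximum principle for the weighted operator $L_a$ (standard in Muckenhoupt-$A_2$ elliptic theory) yields $\varepsilon\phi\le u$ throughout $B_\rho^+$. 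Restricting to the $y$-axis and using $a+2s-1=0$,
\[
-y^a\,\frac{u(0,y)}{y} \;\le\; -y^a\,\frac{\varepsilon\phi(0,y)}{y} \;=\; -\varepsilon\bigl(1-\lambda\rho^2+\lambda y^2\bigr),
\]
which converges to $-\varepsilon(1-\lambda\rho^2)<0$ as $y\to 0^+$, proving the first conclusion. Under the extra assumption $y^au_y\in C(\overline{\mathcal C_{R,1}})$, since $\varepsilon\phi-u\le 0$ attains its maximum $0$ at $(0,0)$, one further obtains $\partial_{\nu^a}u(0,0)\le \partial_{\nu^a}(\varepsilon\phi)(0,0)=-2s\varepsilon(1-\lambda\rho^2)<0$.

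\textbf{Main obstacle.} The central difficulty is producing a barrier meeting three competing requirements at once: vanishing at $(0,0)$ at precisely the rate $y^{2s}$, being an $L_a$-sub-solution, and sitting below $u$ on the rest of $\partial B_\rho^+$. The identity $L_a(y^{2s}\psi)=y\Delta\psi+(2s+1)\partial_y\psi$ is the decisive trick that makes a simple quadratic choice of $\psi$ work; without it, the singularity of the weight for $s>1/2$ (i.e.\ $a<0$) would force classical exponential barriers to become super-solutions near $\{y=0\}$, breaking the comparison.
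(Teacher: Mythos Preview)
The paper does not actually prove this lemma; it is stated in the Appendix with a reference to \cite{CS1} (Cabr\'e--Sire). So there is no ``paper's proof'' to match, and a barrier argument built on the identity
\[
L_a\bigl(y^{2s}\psi\bigr)=y\,\Delta\psi+(2s+1)\,\partial_y\psi
\]
is exactly the standard route taken in that reference. Your computation of $L_a\phi$ and the final chain of inequalities on the $y$-axis are correct.

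There is, however, a genuine gap in the comparison step. You need $\varepsilon\phi\le u$ on the whole half-sphere $\Gamma_\rho^+$, i.e.\ $\varepsilon y^{2s}\le u$ there. Near the equator $\{|x|=\rho,\ y=0\}$ the hypotheses give only $u\ge 0$, so $\inf_{\Gamma_\rho^+\cap\{y>0\}} u/y^{2s}$ may well be $0$ and no positive $\varepsilon$ need exist. Your workaround --- assume $(0,0)$ is an isolated zero of $u$ on $\{y=0\}$ --- is in fact \emph{false} in the very application this lemma is invoked for: in the proof of Theorem~\ref{CC} one applies the Hopf lemma to $\tilde w=\varphi(w)-v$, which vanishes identically on $\partial\RR^{n+1}_+$. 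So the extra hypothesis cannot be used and the argument, as written, does not close.

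The clean repair is to change the comparison domain so that the barrier itself vanishes on the lateral part of the boundary, leaving the top lid as the only place where a positive lower bound on $u$ is required. Work in the subcylinder $D=\Gamma_\rho^0\times(0,\tau)$ with $\rho<R$, $\tau<1$, and take
\[
\Phi(x,y)=\psi_1(x)\,h(y),
\]
where $\psi_1>0$ is the first Dirichlet eigenfunction of $-\Delta_x$ on $\Gamma_\rho^0$ (normalized by $\psi_1(0)=1$, eigenvalue $\mu_1$) and $h$ solves $h''+\tfrac{a}{y}h'=\mu_1 h$ with $h(y)\sim y^{2s}$ as $y\to 0^+$ (a modified Bessel function). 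Then $L_a\Phi=0$, $\Phi=0$ on the bottom and on the lateral wall $\{|x|=\rho\}$, and $\Phi>0$ inside. The only nontrivial boundary inequality is on the top $\Gamma_\rho^0\times\{\tau\}$, which is compact in the open cylinder, so $u\ge m>0$ there and one may choose $\varepsilon>0$ with $\varepsilon\Phi\le u$ on $\partial D$. The weak maximum principle for $L_a$ then gives $\varepsilon\Phi\le u$ in $D$, and evaluating on the $y$-axis yields
\[
-y^{a}\,\frac{u(0,y)}{y}\le -\varepsilon\,y^{a-1}h(y)\,\psi_1(0)\longrightarrow -\varepsilon\cdot 1<0,
\]
since $y^{a-1}h(y)=y^{-2s}h(y)\to 1$. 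The second conclusion follows exactly as you wrote.
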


\section*{Acknowledgements}
Luis Caffarelli is supported by NSF grant DMS-1160802.

\bibliographystyle{alpha}
\bibliography{biblio}

\bigskip\bigskip\bigskip\bigskip 

LC- University of Texas at Austin, Mathematics Department 
2515 Speedway Stop C1200
Austin, Texas 78712-1202, caffarel@math.utexas.edu 

\bigskip\bigskip

YS- Johns Hopkins University, Department of Mathematics, Krieger Hall, 
3400 N. Charles Street, Baltimore, MD 21218, sire@math.jhu.edu
 
\end{document}